\documentclass[preprint,12pt,times]{article}
\usepackage{graphicx}
\usepackage{amsthm}
\usepackage{amssymb}
\usepackage{amsmath}
\usepackage{amscd}
\usepackage{bbm}
\usepackage{float}
\usepackage{booktabs}
\usepackage{tikz}
\usepackage{pgfplots}
\usepackage[margin=2.5cm]{geometry}
\usepackage[labelsep=period]{caption}
\usepackage{adjustbox}
\numberwithin{equation}{section}

\newtheorem{theorem}{Theorem}[section]

\newtheorem{remark}{Remark}[section]

\usepackage[numbers,sort&compress]{natbib}

\numberwithin{figure}{section}
\numberwithin{table}{section}

\allowdisplaybreaks[2]
\newcommand{\ep}{\mathrm{e}}

\newcommand\btd{\raise 2pt \hbox{$\hat\bigtriangledown$}\hskip 1.5pt}
\newcommand\bt{\raise 2pt \hbox{$\bigtriangledown$}\hskip 1.5pt}

\begin{document}
\date{}
\title{Wave breaking for the nonlocal Whitham-type equations}
%

\author{
Jian Chen$^{1}$\thanks{chenj883@mail2.sysu.edu.cn (Jian Chen)},~~
Shaojie Yang$^{2}$\thanks{Corresponding author: shaojieyang@kust.edu.cn (Shaojie Yang)},~~
Zhaoyang Yin$^{1}$\thanks{mcsyzy@mail.sysu.edu.cn (Zhaoyang Yin)}
\\[6mm]
\small $^{1}$ School of Science,  Shenzhen Campus of Sun Yat-sen University,\\
 \small Shenzhen, Guangdong 518107, China
\\~\\
\small $^{2}$ Department of Mathematics,  Faculty of Science, Kunming University of Science and Technology,\\
 \small Kunming,  Yunnan 650500, China
}

\date{}
\maketitle
\begin{abstract}
In this paper, we investigate wave breaking for the nonlocal Whitham-type equations.
We  establish  more general wave breaking criteria that strictly extend earlier wave breaking criteria for the nonlocal Whitham-type equations.  The approach relies on analyzing the  monotonicity and continuity properties of a linear combination comprising the supremum and infimum of the solution's slope.  Further generalizations are demonstrated via applications to the Camassa-Holm equation, the $\mu$-Camassa-Holm equation and the hyperelastic rod equation.  Our refined approach yields more  precise  wave breaking criteria, thereby extending the theoretical understanding of singularity formation in dispersive waves.  \\

 \noindent\emph{Keywords}: Wave breaking; The nonlocal Whitham-type equations; The Fornberg-Whitham equation; Blow-up\\

\noindent\emph{Mathematics Subject Classification}:  35B30; 35B44; 35Q35\\
\end{abstract}
\noindent\rule{15.5cm}{0.5pt}

\newpage

\section{Introduction}\label{sec1}

In 1834,  Russell observed that during the propagation of water waves, the shape and speed remained largely unchanged, with only a gradual decrease in amplitude until eventual dissipation \cite{r1}. This phenomenon was later called  solitary waves. The celebrated Korteweg-de Vries (KdV) equation
\begin{align}\label{KdV}
\begin{cases}
u_t + 6uu_x + u_{xxx} = 0,\\
u(0,x)=u_0(x),
\end{cases}
\end{align}
was in fact derived formally by Boussinesq \cite{r43} in 1877 and was later derived and studied by Korteweg and de Vries \cite{r5} in 1895. Most remarkably, the KdV equation admits soliton solutions---waves that, after colliding and separating, retain their original shapes and amplitudes. The KdV equation possesses a bi-Hamiltonian structure, and its integrability allows for the application of the inverse scattering method to obtain precise information about its  structure \cite{r6}. The variational study of solitons dates back to the work of Boussinesq \cite{r40}, in which it was asserted that the Hamiltonian is minimized at a solitary-wave profile under a fixed $L^2$ constraint. The first rigorous orbital stability result for KdV solitary waves in $H^1(\mathbb{R})$ was proved by Benjamin \cite{r41} and Bona \cite{r42}.

However, when $u_0\in H^1(\mathbb{R})$, the solution of the KdV equation \eqref{KdV} is global (see \cite{r7}),  and therefore fails to capture wave breaking. Whitham \cite{r8} emphasized the breaking phenomenon is one of the most intriguing long-standing problems of water wave theory.  Since the KdV equation cannot describe wave breaking, Whitham  noted that “it is intriguing to know what kind of simpler mathematical equation (than the governing equations of the water wave problem) could include” breaking effects, and he \cite{r8,r9} subsequently proposed the following nonlocal equation
\begin{align}\label{w}
\begin{cases}
u_t+u u_x+\int_{\mathbb R}K(x-\xi)u_\xi(t,\xi)\,d\xi=0,\\
u(0,x)=u_0(x),
\end{cases}
\end{align}
with the singular kernel
\begin{align*}
K(x)=\frac{1}{2\pi}\int_{\mathbb{R}}\left(\frac{\tanh \xi}{\xi} \right)^{\frac{1}{2}}\ep^{ix\xi}d\xi,
\end{align*}
as a  simple model that combines full linear dispersion with long-wave nonlinearity, and it was conjectured to capture wave breaking. Klein et al. \cite{r44} established rigorous bounds between solutions of the Whitham and KdV equations in the long-wave regime.

Since the integral in Eq.\eqref{w} has a singular kernel and is difficult to handle, Whitham \cite{r9} in 1967 and Whitham and Fornberg \cite{r10} in 1978, proposed the Fornberg-Whitham (FW) equation
\begin{align}\label{fw}
\begin{cases}
u_t+\frac{3}{2}uu_x=G*u_x,\\
u(0,x)=u_0(x),
\end{cases}
\end{align}
where $G(x)=\frac{1}{2}\ep^{-|x|}$ is the kernel of the operator $(1-\partial_x^2)^{-1}$ and
\begin{align*}
G*u_x=\frac{1}{2}\int_{\mathbb{R}}\ep^{-|x-\xi|}u_\xi(t,\xi)d\xi,
\end{align*}
as a comparably simple model alternative to the  Whitham equation \eqref{w} while still  capturing wave breaking.


A fundamental question in the theory of nonlinear PDEs concerns the formation and property of singularities. The standard well-posedness theory (see \cite{r11}) asserts that either PDEs exists globally or develop a singularity at a finite time $T<\infty$ such that some norm of the solution becomes unbounded as $t\to T$. the latter phenomenon is called finite time blow-up.

The asymptotic behavior of the solution in the approach to the blow-up time is of particular interest. A simple singularity emerges when the solution itself becomes unbounded within a finite time. In models of water waves, wave breaking is said to occur when the solution (representing the wave profile) remains bounded, yet its slope becomes unbounded in finite time. The wave steepens progressively as it propagates until a point of vertical slope develops—at which point the wave is said to break \cite{r8}.

The formal approach to proving wave breaking for Whitham-type equations, originating from Seliger’s ingenious argument \cite{r12}, consider
\begin{align}\label{a.2}
m(t):=\underset{x\in \mathbb{R}}{\text{inf}}\{u_x(t,x)\}~~\text{and} ~~M(t):=\underset{x\in\mathbb{R}}{\text{sup}}\{u_x(t,x)\},
\end{align}
attained at $x=\xi_1(t)$ and $x=\xi_2(t)$, respectively,  provided  $K(x)$  be bounded and integrable, among other hypotheses.  However, since the mapping $t\to\xi_i(t)$ may be multi-valued, these curves are not generally well-defined.
 Moreover, Seliger's formal analysis relies on the additional smoothness assumption for the curves $\xi_1(t)$ and $\xi_2(t)$.  Constantin and Escher \cite{r4} later eliminated these restrictive conditions through a rigorous analysis. Motivated by their methodology, Ma, Liu and Qu \cite{r2} derived an improved wave breaking criterion.


In the following,  we recall some earlier results on wave breaking for
the Whitham equation and the FW equation,  based on the methods mentioned in the previous paragraph. For other wave breaking results, we  refer the reader to \cite{r13,r14,r15,r16,r17,r18,r25,r45,r46} and the references therein.


{\bf Earlier results on wave breaking for the Whitham equation.} Following the methods in \cite{r12,r2,r4},  assume that $K(x)$ is a regular (continuous and integrable over $\mathbb{R}$) and positive symmetric kernel, which decreases monotonically over $\mathbb{R}^+$. Differentiating Whitham equation \eqref{w} with respect to $x$ and evaluating the resulting equation at $\xi_1(t)$ and $\xi_2(t)$, the wave breaking  can be derived from the following Riccati-type differential inequalities
\begin{align}\label{riccati}
\begin{cases}
m^{\prime}(t)\leq-m^{2}(t)+K(0)\left(M(0)-m(0)\right),\\
 M^{\prime}(t)\leq-M^{2}(t)+K(0)\left(M(0)-m(0)\right),
 \end{cases}
 \text{a.e.}~\text{on}~(0,T).
\end{align}
Constantin and Escher \cite{r4} showed that if the $M(0)$ and $m(0)$ satifies
\begin{align}\label{sdys}
m(0)+M(0)\leq -2K(0),
\end{align}
then wave breaking occurs. Later, by analyzing the monotonicity and continuity properties of  \eqref{riccati}, Ma, Liu and Qu \cite{r2} obtained an improved wave breaking criterion, namely
\begin{align}\label{Ma}
m(0)&<\min\left\{-2K(0),\,\frac{-K(0)-\sqrt{K^2(0)+4K(0)M(0)}}2
\right\}.
\end{align}


{\bf Earlier results on wave breaking for the FW equation.}
Differentiating the FW equation \eqref{fw} with respect to $x$ and evaluating the resulting equation at $\xi_1(t)$ and $\xi_2(t)$, the wave breaking  can be derived from the following Riccati-type differential inequalities
\begin{align}
\begin{cases}
 m^{\prime}(t)\leq-\frac{3}{2}m^{2}(t)+\frac{1}{2}\left(M(t)-m(t)\right),\\
 M^{\prime}(t)\leq-\frac{3}{2}M^{2}(t)+\frac{1}{2}\left(M(t)-m(t)\right).
\end{cases}
\end{align}
Constantin and Escher first proved wave breaking for the FW equation on the real line \cite{r4}.
A similar result has been shown to hold also for the periodic case in \cite{r19}, where it was shown that wave breaking occurs provided  $M(0)$ and $m(0)$ satisfy
\begin{align}\label{sdys1}
m(0)+M(0)< -\frac{2}{3}.
\end{align}
Haziot \cite{r20} provided if  $M(0)$ and $m(0)$ satisfy
\begin{align}\label{hatj}
5k m(0)+kM(0)\leq -4,\qquad k\in\left(0,\frac{3}{5}\right),
\end{align}
then  wave breaking occurs.  More recently, Wei \cite{r3} derived an improved wave breaking criterion, namely
\begin{align}\label{fwys}
m(0)<\min\left\{-\frac{2}{3},\,-\frac{1}{6}\left(1+\sqrt{1+12M(0)}\right)\right\}.
\end{align}


The main purpose of this paper is to rigorously derive  more general wave breaking criteria that strictly extends earlier wave breaking criteria for the Whitham equation and the FW equation. The proof relies on an analysis of the monotonicity and continuity properties of  $m(t)+\lambda M(t) $ with $\lambda>0$.  Our wave breaking criterion for the Whitham equation read
\begin{align*}
m(0)<-\lambda M(0)-\frac{(1+\lambda)^2K(0)}{2\sqrt{\lambda}},~~~\lambda>0,
\end{align*}
which covers wave breaking criteria \eqref{sdys}-\eqref{Ma}(see Remark \ref{rek1} for details).  For the FW equation,  wave breaking criterion is given by
\begin{align*}
m(0)<-\lambda M(0)-\frac{(1+\lambda)^2}{6\sqrt{\lambda}},
\end{align*}
which covers wave breaking criteria \eqref{sdys1}-\eqref{fwys}(see Remark \ref{rek2} for details). Furthermore, we present further generalizations of our approach to other nonlocal equations, including the Camassa-Holm (CH) equation, the $\mu$-Camassa-Holm ($\mu$-CH) equation and the hyperelastic rod equation.


The remainder of this paper is organized as follows.  In Section \ref{sec2}, we present our main results.
In Section \ref{sec3} and Section \ref{sec4}, we prove our wave breaking results.

\section{The main results}\label{sec2}
In this section, we present  the main results concerning  wave breaking for the Whitham equation  and the FW equation.

Our main result concerning  wave breaking for the Whitham equation \eqref{w}  is as follows.
\begin{theorem}\label{wgj}
If $K(x)$ in \eqref{w} is a regular (continuous and integrable over $\mathbb{R}$) and positive symmetric kernel, which decreases monotonically over $\mathbb{R}_+$. Let $u(t,x)$ be a solution of the Cauchy problem \eqref{w} with initial data $u_0\in H^\infty(\mathbb{R})$ and $T$ be the maximal existence time of the unique solution $u$ of \eqref{w}. $m(t)$ and $M(t)$ are given in \eqref{a.2}.  For any $\lambda>0$, assume that $m(0)$ and $M(0)$ satisfy
\begin{align}\label{w0}
m(0)<-\lambda M(0)-\frac{(1+\lambda)^2K(0)}{2\sqrt{\lambda}},
\end{align}
then the corresponding solution $u$ breaks in finite time with an estimate of the blow-up time $T$
as
\begin{align*}
0<T\leq&\frac{\sqrt{\lambda}}{K(0)(1+\lambda)}\ln\frac{-2\sqrt{\lambda}\left(m(0)+\lambda M(0)\right)+K(0)(1+\lambda)^2}{-2\sqrt{\lambda}\left(m(0)+\lambda M(0)\right)-K(0)(1+\lambda)^2}.
\end{align*}
\end{theorem}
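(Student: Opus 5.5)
The plan is to run a Riccati argument on the single quantity $f(t):=m(t)+\lambda M(t)$ instead of on $m$ and $M$ separately. The first step is the standard one. Differentiate \eqref{w} in $x$ and write $v=u_x$. The nonlocal term is $\int K(x-\xi)v_\xi\,d\xi$; integrating by parts and using that $K$ is even, positive and decreasing on $\mathbb R_+$ turns it into a convolution with $-K'$. Evaluating at points where $v(t,\cdot)$ attains its infimum and its supremum bounds this term by $K(0)(M(t)-m(t))$. For the rigorous version I would invoke the Constantin--Escher lemma from \cite{r4}: for $u_0\in H^\infty$, the functions $m$ and $M$ are locally Lipschitz, and a.e. their derivatives equal $v_t$ evaluated at any extremal point. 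This gives, a.e. on $(0,T)$,
\begin{align*}
m'\le -m^2+K(0)(M-m),\qquad M'\le -M^2+K(0)(M-m).
\end{align*}
This is \eqref{riccati} with the current values $M(t)-m(t)$ in place of the initial ones.

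The key step is the algebra. Write $\kappa=K(0)$. Adding the first inequality to $\lambda$ times the second gives
\begin{align*}
f'\le -\left(m^2+(1+\lambda)\kappa m\right)-\left(\lambda M^2-(1+\lambda)\kappa M\right).
\end{align*}
Complete the squares with $p=m+\tfrac{(1+\lambda)\kappa}{2}$ and $q=M-\tfrac{(1+\lambda)\kappa}{2\lambda}$. Then
\begin{align*}
f'\le -(p^2+\lambda q^2)+\frac{(1+\lambda)^2\kappa^2}{4}\Big(1+\frac1\lambda\Big),
\end{align*}
and the shifts cancel in the combination: $p+\lambda q=f$. By Cauchy--Schwarz, $(p+\lambda q)^2\le(1+\lambda)(p^2+\lambda q^2)$. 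Setting $a:=\frac{(1+\lambda)^2\kappa}{2\sqrt\lambda}$, the constant term equals $a^2/(1+\lambda)$, and therefore
\begin{align*}
f'(t)\le -\frac{1}{1+\lambda}\left(f^2(t)-a^2\right)\quad\text{a.e. on }(0,T).
\end{align*}
The number $a$ is exactly the threshold in \eqref{w0}, which confirms that this is the intended combination.

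Hypothesis \eqref{w0} says $f(0)<-a$. Since $f$ is locally Lipschitz, a continuity argument shows that $f(t)<-a$ persists, so $f$ is decreasing. Dividing by $f^2-a^2>0$ and integrating the resulting partial-fraction identity gives
\begin{align*}
\frac{f(t)-a}{f(t)+a}\le \frac{f(0)-a}{f(0)+a}\exp\Big(-\frac{2a}{1+\lambda}t\Big).
\end{align*}
Here $f(t)+a<0$, so the left-hand side exceeds $1$. Hence $f(t)\to-\infty$ no later than $t^*=\frac{1+\lambda}{2a}\ln\frac{f(0)-a}{f(0)+a}$. Using $\frac{1+\lambda}{2a}=\frac{\sqrt\lambda}{\kappa(1+\lambda)}$, this is exactly the stated bound on $T$.

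Finally, I would convert $f\to-\infty$ into wave breaking. Since $u(t,\cdot)\in H^1$ decays at infinity, $M(t)\ge0\ge m(t)$. So $m(t)\le f(t)\to-\infty$, and the slope blows up. Meanwhile $\|u\|_{L^\infty}$ stays bounded: the $L^2$ norm is conserved, and a bound on $u_x$ from below combined with characteristics controls $\sup|u|$ on $[0,T)$, as in \cite{r4}. So the singularity is genuine breaking and not amplitude blow-up.

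I expect the main obstacle to be the rigor, not the algebra. Two points need care. The first is justifying the a.e. differential inequalities and the comparison argument for merely Lipschitz $m$ and $M$; for this I rely on the Constantin--Escher lemma and the blow-up alternative of the local theory. The second is checking that the estimate of the nonlocal term really yields $K(0)(M-m)$ under only the stated hypotheses on $K$.
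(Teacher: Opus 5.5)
Your proposal is correct and follows the paper's proof. It uses the same Constantin--Escher estimate yielding $m'\le -m^2+K(0)(M-m)$ and $M'\le -M^2+K(0)(M-m)$, the same Riccati inequality $f'\le -\frac{1}{1+\lambda}\left(f^2-a^2\right)$ for $f=m+\lambda M$ with $a=\frac{(1+\lambda)^2K(0)}{2\sqrt{\lambda}}$, and the same continuity argument and integration. Your Cauchy--Schwarz step only repackages the paper's substitution $m=f-\lambda M$ followed by completing the square in $M$ (the discarded term is $\frac{\lambda}{1+\lambda}(p-q)^2$ in both), and your final bound $m\le f$ via $M\ge 0$ is an equally valid substitute for the paper's $m\le f/(1+\lambda)$ via $m\le M$.
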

\begin{remark}\label{rek1}
\normalfont
~\\~\noindent{\rm(1)}
When $\lambda=1$, the wave breaking criterion \eqref{w0} reduces to Constantin-Escher's wave breaking criterion \eqref{sdys}.\\

\noindent{\rm(2)}
Let
\begin{align*}
F_{M_0}(\lambda):=\lambda M(0)+\frac{(1+\lambda)^2K(0)}{2\sqrt{\lambda}},
\end{align*}
For some $M_0\geq0$, a direct computation gives
\begin{align*}
F_{M_0}'(\lambda)=M_0+\frac{K(0)(3\lambda^2+2\lambda-1)}{4\lambda^{3/2}}\quad\text{and}\quad
F_{M_0}''(\lambda)=\frac{K(0)(3\lambda^2-2\lambda+3)}{8\lambda^{5/2}}>0.
\end{align*}
Hence $F_{M_0}(\lambda)$ is strictly convex and has a unique minimizer $\lambda_*=\lambda_*(M_0)>0$, which is determined by
\begin{align}\label{Bb1}
4M_0\lambda_*^{3/2} + K(0)(3\lambda_*^2+2\lambda_*-1)=0.
\end{align}

For $0\leq M_0\leq 2K(0)$, we have
\begin{align*}
\frac{d}{dM_0}\left(\lambda_*M_0+\frac{(1+\lambda_*)^2K(0)}{2\sqrt{\lambda_*}}\right)=
\lambda_*+\left(M_0+\frac{K(0)(3\lambda_*^2+2\lambda_*-1)}
{4\lambda_*^{3/2}}\right)\frac{d\lambda_*}{dM_0}
=\lambda_*>0,
\end{align*}
then $F_{M_0}(\lambda_*)$ is increasing with respect to $M_0$. When $M_0=2K(0)$,   $\lambda_*$ satisfy
\begin{align*}
8\lambda_*^{3/2}+3\lambda_*^2+2\lambda_*-1=0,
\end{align*}
which implies
\begin{align*}
\lambda_*=3-2\sqrt2.
\end{align*}
Therefore, one has
\begin{align*}
F_{M_0}(\lambda_*)\leq2K(0)(3-2\sqrt2)+
\frac{(4-2\sqrt2)^2K(0)}{2(\sqrt2-1)}=2K(0).
\end{align*}
Consequently, for $0\leq M_0\leq 2K(0)$, we have
\begin{align}\label{Bb2}
\min\left\{-2K(0),\,\frac{-K(0)-\sqrt{K^2(0)+4K(0)M_0}}2
\right\}=-2K(0)\leq-F_{M_0}(\lambda_*).
\end{align}

For $M_0>2K(0)$, let $r_*=\sqrt{\lambda_*}$, then $0<r_*<\sqrt2-1$.  Together with Eq.\eqref{Bb1}, one has
\begin{align*}
M_0=\frac{K(0)(1-2r_*^2-3r_*^4)}{4r_*^3},\qquad F_{M_0}(\lambda_*)
=\frac{K(0)(3+2r_*^2-r_*^4)}{4r_*}.
\end{align*}
A direct calculation yields
\begin{align*}
F_{M_0}^2(\lambda_*)-K(0)F_{M_0}(\lambda_*)-K(0)M_0=\frac{K^2(0)(1+r_*^2)^2(r_*^2+2r_*-1)(r_*^3-2r_*^2-r_*+4)}{16r_*^3}<0.
\end{align*}
Since
\begin{align*}
\frac{K(0)+\sqrt{K^2(0)+4K(0)M_0}}2
\end{align*}
is the positive root of \(z^2-K(0)z-K(0)M_0=0\), it follows that
\begin{align*}
F_{M_0}(\lambda_*)<\frac{K(0)+\sqrt{K^2(0)+4K(0)M_0}}2,
\qquad M_0>2K(0).
\end{align*}
Consequently, for $M_0>2K(0)$, we have
\begin{align}\label{Bb3}
m_0<\min\left\{-2K(0),\,\frac{-K(0)-\sqrt{K^2(0)+4K(0)M_0}}2\right\}
\leq-F_{M_0}(\lambda_*).
\end{align}
Combining \eqref{Bb2} and \eqref{Bb3},  we find that our wave breaking criterion \eqref{w0} covers wave breaking criterion \eqref{Ma}.
\end{remark}
\begin{figure}[H]
\centering

\captionsetup{
    font=small,
    skip=5pt
}

\begin{adjustbox}{max width=0.98\linewidth,center}

\begin{tikzpicture}

\begin{axis}[
    width=9cm,
    height=6.8cm,
    xmin=-9.6,
    xmax=0.6,
    ymin=0,
    ymax=16.2,
    axis x line=middle,
    axis y line=middle,
    axis line style={draw=none},
    tick style={black},
    xtick={-9,-7.5,-6,-4.5,-3,-1.5},
    ytick={3,6,9,12,15},
    xticklabel style={
        font=\small,
        anchor=north
    },
    yticklabel style={
        font=\small,
        anchor=east
    },
    clip=false,
    samples=300,
    legend style={
        at={(1.025,0.50)},
        anchor=west,
        font=\scriptsize,
        draw=gray,
        fill=white,
        cells={anchor=west},
        row sep=14pt,
        inner xsep=5pt,
        inner ysep=8pt
    }
]

%
%
%

\addplot[
    very thick,
    solid,
    red,
    variable=\s,
    domain=0.2382883086:0.5773502692,
    samples=500
]
(
    {(\s^4-2*\s^2-3)/(4*\s)},
    {(1-2*\s^2-3*\s^4)/(4*\s^3)}
);

\addlegendentry{
    $\displaystyle
    \widetilde m_0=
    \sup_{\lambda>0}
    \left\{
    -\lambda\widetilde M_0
    -\frac{(1+\lambda)^2}{2\sqrt{\lambda}}
    \right\}$
}

%

\addplot[
    very thick,
    solid,
    blue
]
coordinates {
    (-2,0)
    (-2,2)
};

\addlegendentry{
    $\displaystyle
    \widetilde m_0=
    \min\left\{
    -2,\,
    \frac{-1-\sqrt{1+4\widetilde M_0}}{2}
    \right\}$
}

%
%
%

\addplot[
    very thick,
    solid,
    blue,
    domain=-4.5558599581:-2,
    samples=500,
    forget plot
]
{x^2+x};

%
%

\addplot[
    very thick,
    solid,
    green!50!black,
    domain=-9.6:-2
]
{-2-x};

\addlegendentry{
    $\displaystyle
    \widetilde m_0+\widetilde M_0=-2$
}


\addplot[
    only marks,
    mark=*,
    mark size=2pt,
    black,
    forget plot
]
coordinates {
    (-2,2)
};

\node[
    anchor=south east,
    font=\scriptsize
]
at (axis cs:-2.25,2.25)
{$(-2,2)$};


\draw[->,thick]
    (axis cs:-9.6,0)
    --
    (axis cs:0.72,0);

\node[
    anchor=north east,
    font=\scriptsize
]
at (axis description cs:1,-0.10)
{$
    \widetilde m_0=\frac{m(0)}{K(0)}
$};

\draw[->,thick]
    (axis cs:0,0)
    --
    (axis cs:0,16.55);

\node[
    anchor=south,
    font=\scriptsize
]
at (axis description cs:0.941,1.03)
{$
    \widetilde M_0=\frac{M(0)}{K(0)}
$};


\node[
    below right,
    font=\small
]
at (axis cs:0,0)
{$0$};

\end{axis}

\end{tikzpicture}

\end{adjustbox}

\caption{
The region to the left of the red curve represents the optimized wave breaking criterion stated in  Theorem \ref{wgj}. The region to the left of the blue curve represents the wave breaking criterion stated in \eqref{Ma}, whereas the region lying to the left of the green line indicates the wave breaking criterion given in \eqref{sdys}.
}
\label{fig1}

\end{figure}

Our main result concerning   wave breaking for the  FW equation \eqref{fw}  is as follows.

\begin{theorem}\label{fwgj}
Let the initial data $u_0\in H^3(\mathbb{R})$. Let $T$ be the maximal existence time of the unique solution $u$ of  the Cauchy problem  \eqref{fw}. $m(t)$ and $M(t)$ are given in \eqref{a.2}.  For any $\lambda>0$, assume that $m(0)$ and $M(0)$ satisfy
\begin{align}\label{fw0}
m(0)<-\lambda M(0)-\frac{(1+\lambda)^2}{6\sqrt{\lambda}},
\end{align}
then the corresponding solution $u$ breaks in finite time with an estimate of the blow-up time $T$ as
\begin{align*}
0<T\leq\frac{2\sqrt{\lambda}}{1+\lambda}
\ln\frac{-6\sqrt{\lambda}\bigl(m(0)+\lambda M(0)\bigr)+(1+\lambda)^2}{-6\sqrt{\lambda}\bigl(m(0)+\lambda M(0)\bigr)-(1+\lambda)^2}.
\end{align*}
\end{theorem}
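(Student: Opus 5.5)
The plan is to work with the combination $q(t):=m(t)+\lambda M(t)$ and show that it obeys a closed Riccati inequality of the form $q'\le -c\,(q^2-\beta^2)$. The constants are chosen to match the claimed blow-up time:
\begin{align*}
c=\frac{3}{2(1+\lambda)},\qquad \beta=\frac{(1+\lambda)^2}{6\sqrt{\lambda}},
\end{align*}
since then $\frac{1}{2c\beta}=\frac{2\sqrt\lambda}{1+\lambda}$, and \eqref{fw0} is exactly $q(0)<-\beta$.

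As in \cite{r4,r19,r3}, I would take as given the following facts for $u_0\in H^3(\mathbb{R})$. First, $m(t)$ and $M(t)$ are locally Lipschitz. Second, they satisfy a.e. on $(0,T)$
\begin{align*}
m'\le -\tfrac32 m^2+\tfrac12(M-m),\qquad M'\le -\tfrac32 M^2+\tfrac12(M-m).
\end{align*}
This follows from the Constantin--Escher lemma on the evolution of extrema, applied to the differentiated equation $u_{tx}+\frac32 uu_{xx}+\frac32u_x^2=G*u_{xx}=G*u-u$. Third, $M(t)\ge 0\ge m(t)$, because $u_x(t,\cdot)\in H^2(\mathbb{R})$ decays at infinity. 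Fourth, $\|u\|_{L^2}$ is conserved, and the blow-up alternative holds: the solution stays bounded and breaks precisely when $m(t)\to-\infty$ in finite time.

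\textbf{Key algebraic step.} Multiply the second inequality by $\lambda$ and add. With $d:=M-m\ge0$, this gives a.e.
\begin{align*}
q'\le -\tfrac32(m^2+\lambda M^2)+\tfrac{1+\lambda}{2}\,d .
\end{align*}
Next use the identity
\begin{align*}
m^2+\lambda M^2=\frac{(m+\lambda M)^2}{1+\lambda}+\frac{\lambda}{1+\lambda}(M-m)^2 .
\end{align*}
Substituting it, the target $q'\le -c q^2+c\beta^2$ reduces to the one-variable inequality
\begin{align*}
\frac{3\lambda}{2(1+\lambda)}d^2-\frac{1+\lambda}{2}d+\frac{(1+\lambda)^3}{24\lambda}\ge0 .
\end{align*}
Its discriminant is $\frac{(1+\lambda)^2}{4}-\frac{(1+\lambda)^2}{4}=0$, so the left side is a perfect square and the inequality holds for every $d$. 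Hence a.e.
\begin{align*}
q'(t)\le -\frac{3}{2(1+\lambda)}\bigl(q^2(t)-\beta^2\bigr).
\end{align*}
I expect this step, choosing the right quadratic completion so that the $M-m$ term is absorbed exactly, to be the main point; everything else is routine.

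\textbf{Conclusion.} Since $q(0)<-\beta$ and $q$ is absolutely continuous, a continuity argument shows that $q$ is nonincreasing and stays below $-\beta$ on $[0,T)$. On this region $q^2-\beta^2>0$, so dividing and integrating gives
\begin{align*}
\frac{d}{dt}\ln\frac{q-\beta}{q+\beta}\le -2c\beta .
\end{align*}
Therefore
\begin{align*}
\frac{q(t)+\beta}{q(t)-\beta}\le \frac{q(0)+\beta}{q(0)-\beta}\,e^{-2c\beta t}\cdot(\text{rearranged}),
\end{align*}
and the right-hand side forces $q(t)\to-\infty$ before
\begin{align*}
t^*=\frac{1}{2c\beta}\ln\frac{-q(0)+\beta}{-q(0)-\beta}.
\end{align*}
Substituting $c$ and $\beta$ gives exactly the stated bound on $T$. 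Finally, $m\le q$ because $\lambda M\ge0$, so $m(t)\to-\infty$ while $u$ stays bounded (by $\|u\|_{L^\infty}\lesssim\|u\|_{H^1}$ and the a priori $H^1$ control on $[0,T)$). By the blow-up alternative, this is wave breaking.
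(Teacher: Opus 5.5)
Your proposal is correct and follows the paper's proof: the same combination $q=m+\lambda M$, the same closed Riccati inequality $q'\le -\frac{3}{2(1+\lambda)}(q^2-\beta^2)$ with $\beta=\frac{(1+\lambda)^2}{6\sqrt{\lambda}}$, and the same continuity-plus-integration argument, and your perfect square in $d=M-m$ is exactly the paper's square in $M$, because $M-\frac{q}{1+\lambda}=\frac{M-m}{1+\lambda}$. Two minor slips: your displayed ratio estimate has the ratio inverted and should read $\frac{q(t)-\beta}{q(t)+\beta}\le\frac{q(0)-\beta}{q(0)+\beta}e^{-2c\beta t}$, and the boundedness of $u$ should come from $L^2$ conservation together with $\|G_x*u\|_{L^\infty}\le\frac12\|u_0\|_{L^2}$ along characteristics, not from an a priori $H^1$ bound, since only the $L^2$ norm is conserved for the Fornberg--Whitham equation.
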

\begin{remark}\label{rek2}
\normalfont
~\\~\noindent{\rm(1)}
When $\lambda=1$,  wave breaking criterion  \eqref{fw0} reduces to  wave breaking criterion \eqref{sdys1}.\\

\noindent{\rm(2)} Let $\lambda=\frac{1}{5}$, note that $M(0)\geq 0$, it is not difficult to check that
\begin{align*}
m(0)<-\frac{1}{5}M(0)-\frac{4}{5k}<-\frac{1}{5}M(0)-\frac{6\sqrt{6}}{25},~~~k\in\left(0, \frac{3}{5} \right).
\end{align*}
Hence,  our wave breaking criterion \eqref{fw0} covers wave breaking criterion \eqref{hatj}.\\


\noindent{\rm(3)} Let
\begin{align*}
G_{M_0}(\lambda):=\lambda M_0+\frac{(1+\lambda)^2}{6\sqrt{\lambda}},
\end{align*}
For fixed $M_0\geq0$, a direct computation gives
\begin{align*}
G_{M_0}'(\lambda)=M_0+\frac{3\lambda^2+2\lambda-1}{12\lambda^{3/2}},\qquad G_{M_0}''(\lambda)=\frac{3\lambda^2-2\lambda+3}{24\lambda^{5/2}}>0.
\end{align*}
Hence $G_{M_0}$ is strictly convex and has a unique minimizer
$\lambda_*=\lambda_*(M_0)>0$, which is determined by
\begin{align}\label{Ac1}
12M_0\lambda_*^{3/2}+3\lambda_*^2+2\lambda_*-1=0.
\end{align}

For $0\leq M_0\leq 2/3$, we have
\begin{align*}
\frac{d}{dM_0}
\left(\lambda_*M_0+\frac{(1+\lambda_*)^2}{6\sqrt{\lambda_*}}
\right)=\lambda_*+\left(M_0+\frac{3\lambda_*^2+2\lambda_*-1}{12\lambda_*^{3/2}}
\right)\frac{d\lambda_*}{dM_0}=\lambda_*>0,
\end{align*}
which means $G_{M_0}(\lambda_*)$ is increasing with respect to $M_0$, when $M_0=2/3$,  $\lambda_*$ satisfy
\begin{align*}
8\lambda_*^{3/2}+3\lambda_*^2+2\lambda_*-1=0,
\end{align*}
which gives
\begin{align*}
\lambda_*=3-2\sqrt{2}.
\end{align*}
Therefore, one has
\begin{align*}
-\lambda_*M_0-\frac{(1+\lambda_*)^2}{6\sqrt{\lambda_*}}
=-\frac{2}{3}(3-2\sqrt{2})-\frac{\left(4-2\sqrt{2}\right)^2}{6(\sqrt2-1)}=-\frac{2}{3}=\min\left\{-\frac{2}{3},\,\frac{-1-\sqrt{1+12M_0}}6\right\}.
\end{align*}
Consequently, for $0\leq M_0\leq 2/3$, we have
\begin{align}\label{Ad1}
\min\left\{-\frac{2}{3},\,-\frac{1+\sqrt{1+12M_0}}6\right\}=-\frac{2}{3}
\leq-\lambda_*M_0-\frac{(1+\lambda_*)^2}{6\sqrt{\lambda_*}}.
\end{align}

For $M_0>2/3$, let $r_*=\sqrt{\lambda_*}$, then $0<r_*<\sqrt2-1$.  Together with \eqref{Ac1}, one has
\begin{align*}
M_0=\frac{1-2r_*^2-3r_*^4}{12r_*^3},\quad
\lambda_*M_0+\frac{(1+\lambda_*)^2}{6\sqrt{\lambda_*}}=\frac{3+2r_*^2-r_*^4}{12r_*}.
\end{align*}
A direct calculation yields
\begin{align*}
3\left(\frac{3+2r_*^2-r_*^4}{12r_*}\right)^2-\frac{3+2r_*^2-r_*^4}{12r_*}-M_0=\frac{(1+r_*^2)^2(r_*^2+2r_*-1)(r_*^3-2r_*^2-r_*+4)}{48r_*^3}<0.
\end{align*}
Since $(1+\sqrt{1+12M_0})/6$ is the positive root of $3z^2-z-M_0=0$, it follows that
\begin{align*}
\lambda_*M_0+\frac{(1+\lambda_*)^2}{6\sqrt{\lambda_*}}<\frac{1+\sqrt{1+12M_0}}{6},\qquad M_0>\frac{2}{3}.
\end{align*}
Consequently, for $M_0>2/3$, we have
\begin{align}\label{Ad2}
m_0<\min\left\{-\frac{2}{3},\,\frac{-1-\sqrt{1+12M_0}}6\right\}
\leq-\lambda_*M_0-\frac{(1+\lambda_*)^2}{6\sqrt{\lambda_*}}.
\end{align}
Combining \eqref{Ad1} and \eqref{Ad2},  we find that our wave breaking criterion \eqref{fw0} covers wave breaking criterion  \eqref{fwys}.
\end{remark}
\begin{figure}[H]
\centering

\captionsetup{
    font=small,
    skip=5pt
}

\begin{tikzpicture}

\begin{axis}[
    width=9.5cm,
    height=6.8cm,
    xmin=-3.2,
    xmax=0.2,
    ymin=0,
    ymax=5.4,
    axis x line=middle,
    axis y line=middle,
    axis line style={draw=none},
    tick style={black},
    xtick={-3,-2.5,-2,-1.5,-1,-0.5},
    ytick={1,2,3,4,5},
    xticklabel style={
        font=\small,
        anchor=north
    },
    yticklabel style={
        font=\small,
        anchor=east
    },
    clip=false,
    samples=300,
    legend style={
        at={(1.04,0.50)},
        anchor=west,
        font=\scriptsize,
        draw=gray,
        fill=white,
        cells={anchor=west},
        row sep=14pt,
        inner xsep=5pt,
        inner ysep=8pt
    }
]

%
%

\addplot[
    very thick,
    solid,
    red,
    variable=\s,
    domain=0.2395:0.5773502692,
    samples=500
]
(
    {(\s^4-2*\s^2-3)/(12*\s)},
    {(1-2*\s^2-3*\s^4)/(12*\s^3)}
);

\addlegendentry{
    $\displaystyle
    m_0=
    \sup_{\lambda>0}
    \left\{
    -\lambda M_0
    -\frac{(1+\lambda)^2}{6\sqrt{\lambda}}
    \right\}$
}


\addplot[
    very thick,
    solid,
    yellow!80!orange
]
coordinates {
    (-2/3,0)
    (-2/3,2/3)
};

\addplot[
    very thick,
    solid,
    yellow!80!orange,
    domain=-1.530: -0.666666666667,
    samples=500,
    forget plot
]
{3*x^2+x};

\addlegendentry{
    $\displaystyle
    m_0=
    \min\left\{
    -\frac{2}{3},\,
    -\frac{1+\sqrt{1+12M_0}}{6}
    \right\}$
}


\addplot[
    very thick,
    solid,
    blue,
    domain=-7/3:-4/3
]
{-5*x-20/3};

\addlegendentry{
    $\displaystyle
    m_0=-\frac{4}{3}-\frac15 M_0
    $
}


\addplot[
    very thick,
    solid,
    green!50!black,
    domain=-3.2:-2/3
]
{-2/3-x};

\addlegendentry{
    $\displaystyle
    m_0=-\frac23-M_0
    $
}


\addplot[
    only marks,
    mark=*,
    mark size=2pt,
    black,
    forget plot
]
coordinates {
    (-2/3,2/3)
};

\node[
    anchor=south east,
    font=\scriptsize
]
at (axis cs:-0.78,0.76)
{$\left(-\frac23,\frac23\right)$};


\draw[->,thick]
    (axis cs:-3.2,0)
    --
    (axis cs:0.25,0);

\node[
    anchor=north east,
    font=\scriptsize
]
at (axis description cs:1,-0.10)
{$m_0$};

\draw[->,thick]
    (axis cs:0,0)
    --
    (axis cs:0,5.55);

\node[
    anchor=south,
    font=\scriptsize
]
at (axis description cs:0.941,1.03)
{$M_0$};


\node[
    below right,
    font=\small
]
at (axis cs:0,0)
{$0$};

\end{axis}

\end{tikzpicture}

\caption{The region to the left of the red curve represents the optimized wave breaking criterion stated in Theorem \ref{fwgj}. The region to the left of the yellow curve represents Wei's wave breaking criterion \eqref{fwys}. The region lying to the left of the blue line depicts Haziot's wave breaking criterion \eqref{hatj}, whereas that bounded to the left by the green line indicates Constanti-Escher's wave breaking criterion \eqref{sdys1}.
}
\label{fig2}

\end{figure}

\section{Poof of Theorem \ref{wgj}}\label{sec3}

\begin{proof}[Proof of Theorem~\ref{wgj}]
Differentiating \eqref{w} with respect to $x$ and evaluating the resulting equation respectively at $x=\xi_1(t)$ and $\xi_2(t)$, we get
that, for a.e. $t\in(0,T)$,
\begin{align}\label{w1}
m^{\prime}(t)+m^{2}(t)+\int_{\mathbb{R}}K(\eta)u_{xx}(t,\xi_1(t)-\eta)d\eta=0
\end{align}
and
\begin{align}\label{w2}
M^{\prime}(t)+M^{2}(t)+\int_{\mathbb{R}}K(\eta)u_{xx}(t,\xi_2(t)-\eta)d\eta=0.
\end{align}
Applying the same argument in \cite{r4}, we have
\begin{align*}
\int_{\mathbb{R}}K(\eta)u_{xx}(t,\xi_i(t)-\eta)d\eta\leq K(0)(M(t)-m(t)),\quad i=1,2.
\end{align*}
and from \eqref{w1} and \eqref{w2}, we find that, for a.e. $t\in(0,T)$,
\begin{align}\label{w3}
m^{\prime}(t)\leq-m^{2}+K(0)\left(M-m\right),
\end{align}
\begin{align}\label{w4}
 M^{\prime}(t)\leq-M^{2}+K(0)\left(M-m\right).
\end{align}
For any $\lambda>0$, let
\begin{align*}
h_{\lambda}(t)=m(t)+\lambda M(t),
\end{align*}
then from \eqref{w3} and \eqref{w4}, simple computations yield that
\begin{align}\label{w5}
h'_{\lambda}(t)&=m'(t)+\lambda M'(t)\nonumber\\
&\leq -m^{2}+K(0)\left(M-m\right)+\lambda\left[-M^{2}+K(0)\left(M-m\right)\right]\nonumber\\
&=-\left(m^2+\lambda M^2\right)+K(0)(1+\lambda)\left(M-m\right)\nonumber\\
&\leq -\left[\left(h_{\lambda}(t)-\lambda M\right)^2+\lambda M^2\right]
+K(0)(1+\lambda)\left[-h_{\lambda}(t)+\left(1+\lambda\right)M\right]\nonumber\\
&=-\frac{1}{1+\lambda}\left(h^2_{\lambda}(t)-\frac{K^2(0)(1+\lambda)^4}{4\lambda}\right)
-\lambda(1+\lambda)\left(M-\frac{K(0)(1+\lambda)}{2\lambda}-\frac{h_{\lambda}(t)}{1+\lambda}\right)^2\nonumber\\
&\leq -\frac{1}{1+\lambda}\left(h^2_{\lambda}(t)-\frac{K^2(0)(1+\lambda)^4}{4\lambda}\right)\nonumber\\
&=:-\frac{1}{1+\lambda}\left(h^2_{\lambda}(t)-C_1^2\right),
\end{align}
where $C_1=\frac{(1+\lambda)^2K(0)}{2\sqrt{\lambda}}$. In view of the initial condition \eqref{fw0}, we have
\begin{align*}
h_{\lambda}(0)< -C_1\quad \text{and}\quad
h'_{\lambda}(0)\leq-\frac{1}{1+\lambda}\left(h^2_{\lambda}(0)-C_1^2\right)<0.
\end{align*}
Then, we claim that over the time of existence it holds
\begin{align}\label{w6}
h_\lambda(t)<-C_1,~~\text{for~all}~t\in[0,T).
\end{align}
If the above claim is not true, then there exists $t_0\in[0,T)$ such that
\begin{align}\label{w7}
t_0=\inf\left\{t\in[0,T): h_\lambda(t)=-C_1\right\}.
\end{align}
It is easy to see that $t_0>0$. The definition of $t_0$ means that
\begin{align}\label{w8}
h_{\lambda}(t_0)=-C_1\quad \text{and}\quad h_{\lambda}(t)<-C_1\quad \text{for all }t\in[0,t_0).
\end{align}
Thus, from \eqref{w5}, we have
\begin{align}
h'_{\lambda}(t)\leq -\frac{1}{1+\lambda}\left(h^2_{\lambda}(t)-C_1^2\right)<0 \quad \text{for a.e. } t\in[0,t_0),
\end{align}
which means
\begin{align}
h_{\lambda}(t_0)\leq h_{\lambda}(0)< -C_1.
\end{align}
This is contradiction with \eqref{w8}. Thus we prove \eqref{w6}.

Now, in view of \eqref{w6}, we can solve the differential inequality \eqref{w5}. Since $h_\lambda(t)<-C_1$, we have $h_\lambda^2(t)-C_1^2>0$. Thus
\begin{align*}
\frac{h'_\lambda(t)}{h_\lambda^2(t)-C_1^2}
\leq-\frac{1}{1+\lambda}.
\end{align*}
Integrating above inequality yields
\begin{align*}
h_\lambda(t)\leq-C_1\left(\frac{-h_\lambda(0)+C_1+\left(-h_\lambda(0)-C_1\right)
\exp\left(\frac{2C_1}{1+\lambda}t\right)}{-h_\lambda(0)+C_1-\left(-h_\lambda(0)-C_1\right)\exp\left(\frac{2C_1}{1+\lambda}t\right)}\right),
\end{align*}
and then
\begin{align*}
m(t)\leq \frac{m(t)+\lambda M(t)}{1+\lambda}=\frac{h_{\lambda}(t)}{1+\lambda}\rightarrow -\infty,\quad\text{as}\quad t\rightarrow\frac{1+\lambda}{2C_1}\ln\frac{-h_\lambda(0)+C_1}{-h_\lambda(0)-C_1},
\end{align*}
which implies that the solution $u$ breaks down at a finite time $T$ with
\begin{align*}
0<T\leq T^*=\frac{1+\lambda}{2C_1}\ln\frac{-m(0)-\lambda M(0)+C_1}{-m(0)-\lambda M(0)-C_1}.
\end{align*}
This completes the proof of Theorem \ref{wgj}.
\end{proof}

\section{Poof of Theorem \ref{fwgj}}\label{sec4}

\begin{proof}[Proof of Theorem~\ref{fwgj}]
Differentiating \eqref{fw} with respect to $x$ and evaluating the resulting equation respectively at $x=\xi_1(t)$ and $\xi_2(t)$, we get
that, for a.e. $t\in(0,T)$,
\begin{align}\label{fw1}
m^{\prime}(t)+\frac{3}{2}m^{2}=[G_x*u_x](t,\xi_1(t)),~~M^{\prime}(t)+\frac{3}{2}M^{2}=[G_x*u_x](t,\xi_2(t)).
\end{align}
By observing
\begin{align*}
|G_x*h_x|&=\left|-\frac{1}{2}\int_{-\infty}^x\ep^{y-x}h_xdy+\frac{1}{2}\int_x^{+\infty}\ep^{x-y}h_xdy\right|\leq\frac{1}{2}(M(t)-m(t)),
\end{align*}
it's easy to find that, for a.e. $t\in(0,T)$,
\begin{align}\label{fw2}
-\frac{3}{2}m^{2}-\frac{1}{2}\left(M-m\right)\leq m^{\prime}(t)\leq-\frac{3}{2}m^{2}+\frac{1}{2}\left(M-m\right),
\end{align}
\begin{align}\label{fw3}
-\frac{3}{2}M^{2}-\frac{1}{2}\left(M-m\right)\leq M^{\prime}(t)\leq-\frac{3}{2}M^{2}+\frac{1}{2}\left(M-m\right).
\end{align}
For any $\lambda>0$, let
\begin{align}\label{fw4}
h_{\lambda}(t)=m(t)+\lambda M(t),
\end{align}
then from \eqref{fw2} and \eqref{fw3}, simple computations yield that
\begin{align}\label{fw5}
h'_{\lambda}(t)&=m'(t)+\lambda M'(t)\nonumber\\
&\leq -\frac{3}{2}m^{2}+\frac{1}{2}\left(M-m\right)+\lambda\left[-\frac{3}{2}M^{2}+\frac{1}{2}\left(M-m\right)\right]\nonumber\\
&=-\frac{3}{2}\left(m^2+\lambda M^2\right)+\frac{1+\lambda}{2}\left(M-m\right)\nonumber\\
&\leq -\frac{3}{2}\left[\left(h_{\lambda}(t)-\lambda M\right)^2+\lambda M^2\right]+\frac{1+\lambda}{2}\left[-h_{\lambda}(t)+\left(1+\lambda\right)M\right]\nonumber\\
&=-\frac{3}{2(1+\lambda)}\left(h^2_{\lambda}(t)-\frac{(1+\lambda)^4}{36\lambda}\right)-\frac{3}{2}\lambda(1+\lambda)\left(M-\frac{1+\lambda}{6\lambda}-\frac{h_{\lambda}(t)}{1+\lambda}\right)^2\nonumber\\
&\leq -\frac{3}{2(1+\lambda)}\left(h^2_{\lambda}(t)-\frac{(1+\lambda)^4}{36\lambda}\right)\nonumber\\
&=:-\frac{3}{2(1+\lambda)}\left(h^2_{\lambda}(t)-C_2^2\right),
\end{align}
where $C_2=\frac{(1+\lambda)^2}{6\sqrt{\lambda}}$. The subsequent steps are  similar to the proof of Theorem \ref{wgj}, and hence are omitted.

\end{proof}

\section{Further generalizations}
It is noted that some  nonlinear nonlocal  models, such as the CH equation \cite{r21}
\begin{align}\label{CH}
\begin{cases}
u_t+uu_x+G_x*\left(u^2+\frac{1}{2}u_x^2 \right)=0,\\
u(0,x)=u_0(x),
\end{cases}
\end{align}
the $\mu$-CH equation \cite{r22}
 \begin{align}\label{muCH}
\begin{cases}
u_t+uu_x+G_x*\left(2\mu(u)u+\frac{1}{2}u_x^2+\gamma u \right)=0,~~~~~\mu(u)=\int_0^1u(t,x)dx,\\
u(0,x)=u_0(x),
\end{cases}
\end{align}
and the hyperelastic rod equation \cite{r23}
\begin{align}\label{hreq}
\begin{cases}
u_t+\gamma uu_x+G_x*\left(\frac{3-\gamma}{2}u^2+\frac{\gamma}{2}u_x^2+\rho u \right)=0,\\
u(0,x)=u_0(x),
\end{cases}
\end{align}
can be form  a  more general system of the Riccati-type differential inequalities as follows:
\begin{align}\label{gy0}
\begin{cases}
m'(t)\leq -a m^2(t)+b\left(M(t)-m(t)\right)+c,\\[2mm]
M'(t)\leq -a M^2(t)+b\left(M(t)-m(t)\right)+c,
\end{cases}~\text{a.e.}~\text{on}~(0,T),
\end{align}
where $a$ is a positive constant, $b$ and $c$ are non-negative constants,   and $M(t)$ is a nonnegative function of $t$.  The approach for proving Theorem \ref{wgj} and Theorem \ref{fwgj}  can be extended, with suitable modifications, to \eqref{gy0}, thereby yielding the following wave breaking result.

\begin{theorem}\label{gygj}
Let $m(t)$ and $M(t)$ be two continuous and almost everywhere differentiable functions defined in $t\in[0,T)$ with $T<\infty$ satisfying \eqref{gy0}. For any $\lambda>0$, suppose that the initial data $m(0)$ and $M(0)$ satisfy
\begin{align}\label{gy1}
m(0)<-\lambda M(0)-\dfrac{1+\lambda}{a}\sqrt{ac+\frac{b^2(1+\lambda)^2}{4\lambda}}.
\end{align}
Then the corresponding solution $u$ breaks in finite time with an estimate of the blow-up time $T$ as
\begin{align*}
0<T\leq\frac{1}{
2\sqrt{ac+\frac{b^2(1+\lambda)^2}{4\lambda}}}\ln
\frac{-a\bigl(m(0)+\lambda M(0)\bigr)+(1+\lambda)\sqrt{ac+\frac{b^2(1+\lambda)^2}{4\lambda}}}{-a\bigl(m(0)+\lambda M(0)\bigr)-(1+\lambda)\sqrt{ac+\frac{b^2(1+\lambda)^2}{4\lambda}}}.
\end{align*}
\end{theorem}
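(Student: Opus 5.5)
The plan is to follow the proofs of Theorem \ref{wgj} and Theorem \ref{fwgj} verbatim, but with the general coefficients $a,b,c$ in \eqref{gy0}. Fix $\lambda>0$ and set $h_\lambda(t)=m(t)+\lambda M(t)$. This function is continuous on $[0,T)$ and differentiable almost everywhere. Adding the first inequality of \eqref{gy0} to $\lambda$ times the second gives, for a.e. $t$,
\begin{align*}
h_\lambda'(t)\leq -a\left(m^2+\lambda M^2\right)+b(1+\lambda)(M-m)+(1+\lambda)c .
\end{align*}
I will substitute $m=h_\lambda-\lambda M$, so that
\begin{align*}
m^2+\lambda M^2=h_\lambda^2-2\lambda h_\lambda M+\lambda(1+\lambda)M^2,\qquad M-m=(1+\lambda)M-h_\lambda .
\end{align*}
The right-hand side is then a concave quadratic in $M$ with leading coefficient $-a\lambda(1+\lambda)$ and linear coefficient $2a\lambda h_\lambda+b(1+\lambda)^2$.

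The key step is to complete the square in $M$ and discard the resulting nonpositive square term, exactly as in \eqref{w5} and \eqref{fw5}. Bounding the quadratic by its maximum $\bigl(2a\lambda h_\lambda+b(1+\lambda)^2\bigr)^2/\bigl(4a\lambda(1+\lambda)\bigr)$ and expanding, the cross terms $\pm b(1+\lambda)h_\lambda$ cancel. The $h_\lambda^2$ terms combine to $-\frac{a}{1+\lambda}h_\lambda^2$. This yields
\begin{align*}
h_\lambda'(t)\leq-\frac{a}{1+\lambda}\left(h_\lambda^2(t)-C^2\right),\qquad C:=\frac{1+\lambda}{a}\sqrt{ac+\frac{b^2(1+\lambda)^2}{4\lambda}},
\end{align*}
since $\frac{1+\lambda}{a}\bigl((1+\lambda)c+\frac{b^2(1+\lambda)^3}{4a\lambda}\bigr)=C^2$. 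Verifying this algebra, in particular the cancellation of the linear term in $h_\lambda$, is the only place requiring care. Everything else is identical to Section \ref{sec3}. Note that the sign assumption on $M$ is not actually used here.

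Next, hypothesis \eqref{gy1} says exactly $h_\lambda(0)<-C$. By the same first-hitting-time argument as for \eqref{w6}, $h_\lambda(t)<-C$ for all $t\in[0,T)$. Indeed, on $[0,t_0)$ the derivative is a.e. negative, so $h_\lambda(t_0)\le h_\lambda(0)<-C$, which contradicts $h_\lambda(t_0)=-C$. Here I use that $h_\lambda$ is absolutely continuous, which is implicitly assumed in the paper, so that an a.e. nonpositive derivative forces monotonicity.

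Then $h_\lambda^2-C^2>0$, and integrating $h_\lambda'/(h_\lambda^2-C^2)\le -a/(1+\lambda)$ gives the explicit upper bound for $h_\lambda(t)$. This bound tends to $-\infty$ as
\begin{align*}
t\to T^*:=\frac{1+\lambda}{2aC}\ln\frac{-h_\lambda(0)+C}{-h_\lambda(0)-C}.
\end{align*}
Since $m\le M$ (an infimum does not exceed a supremum), we have $m\le h_\lambda/(1+\lambda)$, so $m(t)\to-\infty$. Hence the functions cannot persist beyond $T^*$, and $T\le T^*$. Substituting $aC=(1+\lambda)\sqrt{ac+b^2(1+\lambda)^2/(4\lambda)}$ and multiplying numerator and denominator inside the logarithm by $a$ gives precisely the stated bound on $T$.
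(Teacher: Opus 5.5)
Your proposal is correct and follows essentially the route the paper intends; the paper omits this proof and refers back to the arguments of Theorems \ref{wgj} and \ref{fwgj}. Completing the square in $M$ does yield $h_\lambda'\le-\frac{a}{1+\lambda}\left(h_\lambda^2-C^2\right)$ with the stated $C$, and the hitting-time and integration steps reproduce the stated bound on $T$. Your caveat is also valid: $h_\lambda$ must be absolutely continuous (e.g.\ Lipschitz, as $m$ and $M$ are in the PDE applications), not merely continuous and a.e.\ differentiable, and the paper's hypotheses gloss over this.
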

\begin{remark}
\normalfont
Let
\begin{align*}
A:=\frac{b+\sqrt{b^2+ac}}{a},\qquad
B_{M_0}:=\frac{b+\sqrt{b^2+4a(bM_0+c)}}{2a}.
\end{align*}
Then the wave breaking condition in Theorem 2.1 of \cite{r2} can be written as
\begin{align}\label{mafei}
m_0<\min\{-A,-B_{M_0}\}.
\end{align}

For $\lambda>0$, define
\begin{align*}
\Gamma_{M_0}(\lambda)
:=\lambda M_0+\frac{1+\lambda}{a}
\sqrt{ac+\frac{b^2(1+\lambda)^2}{4\lambda}}.
\end{align*}

~\\~\noindent{\rm(1)} When $b>0$, for fixed $M_0\geq0$, a direct computation gives
\begin{align*}
\Gamma_{M_0}'(\lambda)=M_0+\frac{8ac\lambda^2+b^2(1+\lambda)^2(3\lambda-1)}{8a\lambda^2\sqrt{ac+\dfrac{b^2(1+\lambda)^2}{4\lambda}}}
\end{align*}
and 
\begin{align*}
\Gamma_{M_0}''(\lambda)&=\frac{b^2(1+\lambda)\left[16ac\lambda(\lambda^2-\lambda+1)+b^2\left(3\lambda^4+4\lambda^3+2\lambda^2+4\lambda+3\right)\right]}{64a\lambda^4\left(ac+\dfrac{b^2(1+\lambda)^2}{4\lambda}\right)^{3/2}}>0.
\end{align*}
Hence $\Gamma_{M_0}$ is strictly convex and has a unique
minimizer $\lambda_*=\lambda_*(M_0)>0$, which is determined by
\begin{align}\label{Cc1}
&8aM_0\lambda_*^2
\sqrt{ac+\frac{b^2(1+\lambda_*)^2}{4\lambda_*}}
+8ac\lambda_*^2
+b^2(1+\lambda_*)^2(3\lambda_*-1)=0.
\end{align}
For $0\leq M_0\leq A$, we have
\begin{align*}
\frac{d}{dM_0}\left[
\lambda_*M_0+\frac{1+\lambda_*}{a}
\sqrt{ac+\frac{b^2(1+\lambda_*)^2}{4\lambda_*}}
\right]
&=\lambda_*+\left[M_0+\frac{8ac\lambda_*^2+b^2(1+\lambda_*)^2(3\lambda_*-1)}{8a\lambda_*^2\sqrt{ac+\dfrac{b^2(1+\lambda_*)^2}{4\lambda_*}}}\right]
\frac{d\lambda_*}{dM_0}\\
&=\lambda_*>0,
\end{align*}
then $\Gamma_{M_0}(\lambda_*)$ is increasing with respect to
$M_0$. When $M_0=A$, $\lambda_*$ satisfies
\begin{align*}
&8\left(b+\sqrt{b^2+ac}\right)\lambda_*^2
\sqrt{ac+\frac{b^2(1+\lambda_*)^2}{4\lambda_*}}
+8ac\lambda_*^2+b^2(1+\lambda_*)^2(3\lambda_*-1)=0.
\end{align*}
which implies
\begin{align*}
\lambda_*(A)
=\left(\sqrt{\frac{b+\sqrt{b^2+ac}}{b}}-\sqrt{\frac{\sqrt{b^2+ac}}{b}}\right)^2
=\left(\sqrt{\frac{aA}{b}}-\sqrt{\frac{aA}{b}-1}
\right)^2.
\end{align*}
Therefore, one has
\begin{align*}
\Gamma_{M_0}\bigl(\lambda_*(M_0)\bigr)
\leq\Gamma_A\bigl(\lambda_*(A)\bigr)&=\lambda_*(A)A+\frac{1+\lambda_*(A)}{a}\sqrt{ac+\frac{b^2\left(1+\lambda_*(A)\right)^2}{4\lambda_*(A)}}=A.
\end{align*}
Consequently, for $0\leq M_0\leq A$, we have
\begin{align}\label{Cc2}
\min\{-A,-B_{M_0}\}
=-A\leq-\Gamma_{M_0}\bigl(\lambda_*(M_0)\bigr).
\end{align}

For $M_0>A$, let $r_*=\sqrt{\lambda_*}$, then
\begin{align*}
0<r_*<\sqrt{\frac{b+\sqrt{b^2+ac}}{b}}-\sqrt{\frac{\sqrt{b^2+ac}}{b}}.
\end{align*}
Together with \eqref{Cc1}, one has
\begin{align*}
M_0&=\frac{b^2(1+r_*^2)^2(1-3r_*^2)-8ac\,r_*^4}{4ar_*^3
\sqrt{b^2(1+r_*^2)^2+4ac\,r_*^2}},
\end{align*}
\begin{align*}
\lambda_*M_0+\frac{1+\lambda_*}{a}
\sqrt{ac+\frac{b^2(1+\lambda_*)^2}{4\lambda_*}}=\frac{b^2(3+5r_*^2+r_*^4-r_*^6)+8ac\,r_*^2}{4ar_*\sqrt{b^2(1+r_*^2)^2+4ac\,r_*^2}}.
\end{align*}
A direct calculation yields
\begin{align*}
&a\left(\frac{b^2(3+5r_*^2+r_*^4-r_*^6)+8ac\,r_*^2}{
4ar_*\sqrt{b^2(1+r_*^2)^2+4ac\,r_*^2}}\right)^2
-b\left(\frac{b^2(3+5r_*^2+r_*^4-r_*^6)+8ac\,r_*^2
}{4ar_*\sqrt{b^2(1+r_*^2)^2+4ac\,r_*^2}}
\right)-bM_0-c
\\
&=\frac{b^2(1+r_*^2)^2}{16ar_*^3
\left[b^2(1+r_*^2)^2+4ac\,r_*^2\right]}
\\
&\quad\times\frac{\left[
16ac\,r_*^4-b^2\left(r_*^8-4r_*^6-10r_*^4-4r_*^2+1\right)\right]}{b^2r_*(r_*^2-3)^2(1+r_*^2)^2
+16ac\,r_*^3(2-r_*^2)+4b(1-r_*^4)\sqrt{b^2(1+r_*^2)^2+4ac\,r_*^2}}
\\
&\quad\times\left[16ac\,r_*^2(r_*^2-2)^2+b^2\left(-r_*^{10}+4r_*^8-6r_*^6-12r_*^4+15r_*^2+16\right)\right]
<0.
\end{align*}
Since $B_{M_0}$ is the positive root of $az^2-bz-(bM_0+c)=0$, it follows that
\begin{align*}
\lambda_*M_0+\frac{1+\lambda_*}{a}
\sqrt{ac+\frac{b^2(1+\lambda_*)^2}{4\lambda_*}}
<B_{M_0},\qquad M_0>A.
\end{align*}
Consequently, for $M_0>A$, we have
\begin{align}\label{Cc3}
m_0<\min\{-A,-B_{M_0}\}=-B_{M_0}<-\Gamma_{M_0}(\lambda_*).
\end{align}
Combining \eqref{Cc2} and \eqref{Cc3}, we find that our
wave breaking condition \eqref{gy1} covers the wave breaking condition \eqref{mafei}.\\
\noindent{\rm(2)} When $b=0$, we have
\begin{align*}
\Gamma_{M_0}(\lambda)
=\lambda M_0+(1+\lambda)\sqrt{\frac{c}{a}},
\end{align*}
then one can choose $\lambda>0$ sufficiently small such that
\begin{align*}
m_0<-\sqrt{\frac{c}{a}},
\end{align*}
which coincides with the wave breaking condition \eqref{mafei}.
\end{remark}
\pgfmathsetmacro{\sigmaVal}{1+sqrt(2)}
\pgfmathsetmacro{\minusSigmaVal}{-\sigmaVal}

\begin{figure}[H]

\centering

\captionsetup{
    font=small,
    skip=5pt
}

\begin{adjustbox}{max width=0.98\linewidth,center}

\begin{tikzpicture}

\begin{axis}[
    width=9cm,
    height=6.8cm,
    xmin=-5.2,
    xmax=0.2,
    ymin=0,
    ymax=13.2,
    axis x line=middle,
    axis y line=middle,
    axis line style={draw=none},
    tick style={black},
    xtick={-5,-4,-3,-2,-1},
    ytick={2,4,6,8,10,12},
    xticklabel style={
        font=\small,
        anchor=north
    },
    yticklabel style={
        font=\small,
        anchor=east
    },
    clip=false,
    samples=300,
    legend style={
        at={(1.025,0.50)},
        anchor=west,
        font=\scriptsize,
        draw=gray,
        fill=white,
        cells={anchor=west},
        row sep=14pt,
        inner xsep=5pt,
        inner ysep=8pt
    },
    legend cell align=left
]

%
%
%
%

\addplot[
    very thick,
    solid,
    red,
    variable=\l,
    domain=0.0594998:0.2360679775,
    samples=700
]
(
    {
        ((\l+3)*(\l^2-4*\l-1))
        /
        (
            4*sqrt(\l)*
            sqrt(\l^2+6*\l+1)
        )
    },
    {
        -((3*\l+1)*(\l^2+4*\l-1))
        /
        (
            4*\l*sqrt(\l)*
            sqrt(\l^2+6*\l+1)
        )
    }
);

\addlegendentry{
    $\displaystyle
    m_0=
    \sup_{\lambda>0}
    \left\{\,
    -\lambda M_0
    -(1+\lambda)
    \sqrt{
        1+\frac{(1+\lambda)^2}{4\lambda}
    }
    \,\right\}$
}

%

\addplot[
    very thick,
    solid,
    blue
]
coordinates {
    (\minusSigmaVal,0)
    (\minusSigmaVal,\sigmaVal)
};

\addlegendentry{
    $\displaystyle
    m_0=
    \min\left\{\,
    -1-\sqrt2,\,
    -\frac{1+\sqrt{5+4M_0}}{2}
    \,\right\}$
}

%

\addplot[
    very thick,
    solid,
    blue,
    variable=\Y,
    domain=2.4142135624:13.2,
    samples=700,
    forget plot
]
(
    {
        -(1+sqrt(5+4*\Y))/2
    },
    {\Y}
);


\addplot[
    only marks,
    mark=*,
    mark size=2pt,
    black,
    forget plot
]
coordinates {
    (\minusSigmaVal,\sigmaVal)
};

\node[
    anchor=south west,
    font=\scriptsize
]
at (axis cs:-2.33,2.54)
{$\left(-1-\sqrt2,\,1+\sqrt2\right)$};


\draw[->,thick]
    (axis cs:-5.2,0)
    --
    (axis cs:0.28,0);

\node[
    anchor=north east,
    font=\scriptsize
]
at (axis description cs:1,-0.10)
{$m_0$};

\draw[->,thick]
    (axis cs:0,0)
    --
    (axis cs:0,13.55);

\node[
    anchor=south,
    font=\scriptsize
]
at (axis description cs:0.963,1.03)
{$M_0$};


\node[
    below right,
    font=\small
]
at (axis cs:0,0)
{$0$};

\end{axis}

\end{tikzpicture}

\end{adjustbox}

\caption{
The region to the left of the red curve represents the optimized wave breaking criterion stated in Theorem \ref{gygj}.  The region to the left of the blue curve represents the wave breaking criterion given in Theorem 2.1 of \cite{r2}. (Without loss of generality, set $a=b=c=1$).
}

\label{fig3}

\end{figure}
The CH equation \eqref{CH} can be form the following Riccati-type differential inequalities \cite{r31}
\begin{align}\label{}
\begin{cases}
m'(t)\leq-\dfrac{1}{2}m^2(t)
+|\gamma|\left(M(t)-m(t)\right)+C_3,\\[2mm]
M'(t)\leq-\dfrac{1}{2}M^2(t)
+|\gamma|\left(M(t)-m(t)\right)+C_3,
\end{cases}
\qquad \text{a.e. on }(0,T),
\end{align}
where
\begin{align*}
C_3=\left(\frac{1}{2}-\frac{a}{4}\right)
\left\|u_0\right\|_{H^1(\mathbb{R})}^2+\frac{\sqrt{2}}{2}|\gamma|\left\|u_0\right\|_{H^1(\mathbb{R})}-\frac{1}{4}\gamma\quad\text{with}~\gamma&=\frac{a^2+a-2}{2},~-2\leq a\leq 1.
\end{align*}
Applying Theorem \ref{gygj}, we obtain the following wave breaking result for the CH equation.
\begin{theorem}
Assume that $u\in C([0,T);H^s(\mathbb{R}))\cap C^1([0,T);H^{s-1}(\mathbb{R})), s\geq 2$ is a solution of the Cauchy problem \eqref{CH} with the initial value $u_0\in H^s(\mathbb{R})$. For any $\lambda>0$, assume that
\begin{align}
m_0<-\lambda M_0-\frac{1+\lambda}{\sqrt{\lambda}}
\sqrt{2\lambda C_3+\gamma^2(1+\lambda)^2}.
\end{align}
Then the corresponding solution of \eqref{CH} breaks down in finite time $T_0$, and
\begin{align}
T_0\leq t^*&=\frac{\sqrt{\lambda}}
{\sqrt{2\lambda C_3+\gamma^2(1+\lambda)^2}}
\ln\frac{-\sqrt{\lambda}\bigl(m_0+\lambda M_0\bigr)
+(1+\lambda)\sqrt{2\lambda C_3+\gamma^2(1+\lambda)^2}
}{-\sqrt{\lambda}\bigl(m_0+\lambda M_0\bigr)
-(1+\lambda)\sqrt{2\lambda C_3+\gamma^2(1+\lambda)^2}}.
\end{align}
\end{theorem}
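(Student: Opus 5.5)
The plan is to obtain this as a direct specialization of Theorem \ref{gygj}. By \cite{r31}, the quantities $m(t)=\inf_x u_x(t,x)$ and $M(t)=\sup_x u_x(t,x)$ are continuous and almost everywhere differentiable on $[0,T)$. They satisfy the Riccati-type system \eqref{gy0} with
\begin{align*}
a=\frac12,\qquad b=|\gamma|,\qquad c=C_3,
\end{align*}
where $a>0$ and $b\geq 0$. Since $u\in H^s$ with $s\geq2$ decays at infinity, $u_x$ must take nonnegative values somewhere, so $M(t)\geq0$ as required. We also need $c\geq0$. If $C_3$ could be negative, we replace it by $\max\{C_3,0\}$, which only weakens the inequalities; I expect the standing assumptions make $C_3\ge0$ anyway.

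The main step is to translate the quantity $\frac{1+\lambda}{a}\sqrt{ac+\frac{b^2(1+\lambda)^2}{4\lambda}}$ into the constants of the statement. With the values above,
\begin{align*}
\frac{1+\lambda}{a}\sqrt{ac+\frac{b^2(1+\lambda)^2}{4\lambda}}=2(1+\lambda)\sqrt{\frac{C_3}{2}+\frac{\gamma^2(1+\lambda)^2}{4\lambda}}=\frac{1+\lambda}{\sqrt{\lambda}}\sqrt{2\lambda C_3+\gamma^2(1+\lambda)^2}.
\end{align*}
So the assumption on $m_0$ in the statement is exactly \eqref{gy1}.

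For the time bound, write $S=\sqrt{2\lambda C_3+\gamma^2(1+\lambda)^2}$. Then $\sqrt{ac+\frac{b^2(1+\lambda)^2}{4\lambda}}=\frac{S}{2\sqrt\lambda}$, so the prefactor in Theorem \ref{gygj} becomes $\frac{1}{2\cdot S/(2\sqrt\lambda)}=\frac{\sqrt\lambda}{S}$. For the logarithm's argument, multiply numerator and denominator by $2\sqrt\lambda$, using $a=\frac12$:
\begin{align*}
-\tfrac12(m_0+\lambda M_0)\pm(1+\lambda)\frac{S}{2\sqrt\lambda}\ \longmapsto\ -\sqrt\lambda(m_0+\lambda M_0)\pm(1+\lambda)S.
\end{align*}
This matches $t^*$ exactly.

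The only real work is checking that the hypotheses of Theorem \ref{gygj} hold. The main obstacle is confirming that the cited inequalities from \cite{r31} apply to $H^s$ solutions, with $m$ and $M$ almost everywhere differentiable. This follows from the Constantin–Escher argument, by approximating with smoother data if needed, as the paper does in the earlier theorems.

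Finally, $m(t)\to-\infty$ as $t$ tends to a finite time $\le t^*$, and $\|u\|_{L^\infty}$ is controlled by the conserved $H^1$ norm. So the solution stays bounded while its slope blows up, which is wave breaking at some $T_0\le t^*$.
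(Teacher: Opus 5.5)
Your proposal is correct and is exactly the paper's route: the paper obtains this theorem simply by applying Theorem \ref{gygj} with Riccati coefficients $\frac12$, $|\gamma|$, $C_3$, and your conversion of both the threshold and $t^*$ checks out. Your hedge on $c\ge0$ is unnecessary. For the parameter $-2\le a\le 1$ appearing in $C_3$, one has $\gamma=\frac{(a+2)(a-1)}{2}\le 0$ and $\frac12-\frac a4\ge\frac14$, so $C_3\ge 0$ holds directly.
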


The $\mu$-CH equation \eqref{muCH} can be form the following Riccati-type differential inequalities \cite{r2}
\begin{align}\label{muCHMm}
\begin{cases}
m'(t)\leq -\frac{1}{2} m^2(t)+\frac{5}{2}|\gamma|\left(M(t)-m(t)\right)+C_4,\\[2mm]
M'(t)\leq -\frac{1}{2} M^2(t)+\frac{5}{2}|\gamma|\left(M(t)-m(t)\right)+C_4,
\end{cases}\text{a.e.}~\text{on}~(0,T),
\end{align}
where
\begin{align*}
C_4=\max\left\{0,\,\frac{\sqrt{3}}{3}|\mu(u_0)|E(0)-\frac{1}{2}E^2(0)\right\},\quad\mu(u)=\mu(u_0),
\end{align*}
and
\begin{align*}
E(t)=\left(\int_0^1u_x^2dx \right)^{\frac{1}{2}}=\left(\int_0^1u_{0,x}^2dx \right)^{\frac{1}{2}}=E(0).
\end{align*}
Using Theorem \ref{gygj}, we arrive at the following wave breaking result for the $\mu$-CH equation.
\begin{theorem}
Assume that $u\in C([0,T);H^s(\mathbb{S}))
\cap C^1([0,T);H^{s-1}(\mathbb{S})), s\geq 2$,
is a solution of the Cauchy problem \eqref{muCH} with the initial value $u_0\in H^s(\mathbb{S})$. For any $\lambda>0$, suppose that
\begin{align}
m_0<-\lambda M_0
-\frac{1+\lambda}{2\sqrt{\lambda}}
\sqrt{8\lambda C_4+25\gamma^2(1+\lambda)^2}.
\end{align}
Then the corresponding solution of \eqref{muCH} blows up in finite time
\begin{align*}
T_0\leq t^*=\frac{2\sqrt{\lambda}}{\sqrt{8\lambda C_4+25\gamma^2(1+\lambda)^2}}\ln\frac{-2\sqrt{\lambda}\bigl(m_0+\lambda M_0\bigr)+(1+\lambda)\sqrt{8\lambda C_4+25\gamma^2(1+\lambda)^2}}{-2\sqrt{\lambda}\bigl(m_0+\lambda M_0\bigr)-(1+\lambda)\sqrt{8\lambda C_4+25\gamma^2(1+\lambda)^2}}.
\end{align*}
\end{theorem}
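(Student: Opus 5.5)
The plan is to obtain the statement as a direct specialization of Theorem \ref{gygj}, fed by the Riccati-type system \eqref{muCHMm} for the $\mu$-CH equation. I will take the coefficients
\begin{align*}
a=\frac12,\qquad b=\frac52|\gamma|,\qquad c=C_4 .
\end{align*}
These satisfy the structural requirements of \eqref{gy0}: $a>0$, and $b,c\ge 0$ because $C_4$ is defined as a maximum with $0$.

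First I will check the hypotheses of Theorem \ref{gygj} on $m$ and $M$.
\begin{itemize}
\item For $u\in C([0,T);H^s(\mathbb{S}))\cap C^1([0,T);H^{s-1}(\mathbb{S}))$ with $s\ge 2$, the functions $m(t)=\inf_x u_x$ and $M(t)=\sup_x u_x$ are locally Lipschitz, hence continuous and a.e.\ differentiable. This is the Constantin--Escher lemma on the infimum and supremum of a $C^1$-in-time family.
\item If $s=2$ is too little regularity for that lemma, I will first prove the result for $s\ge 3$ and then pass to $s\ge2$ by density and continuous dependence, as is standard.
\item $M(t)\ge 0$ holds because $u$ is periodic, so $\int_{\mathbb{S}}u_x\,dx=0$ and $u_x$ must take a nonnegative value.
\item The inequalities \eqref{muCHMm} are quoted from \cite{r2}. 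They rely on the conservation of $\mu(u)$ and of $E(t)$ stated above, which is what makes $C_4$ a constant.
\item If $T=\infty$, I will argue by contradiction: assume global existence, apply Theorem \ref{gygj} on a long finite interval, and obtain blow-up of $m$ before the claimed time.
\end{itemize}

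The main step is the algebraic identification of constants. Set $S:=\sqrt{ac+\frac{b^2(1+\lambda)^2}{4\lambda}}$. Then
\begin{align*}
S=\sqrt{\frac{C_4}{2}+\frac{25\gamma^2(1+\lambda)^2}{16\lambda}}=\frac{1}{4\sqrt{\lambda}}\sqrt{8\lambda C_4+25\gamma^2(1+\lambda)^2}.
\end{align*}
From this:
\begin{itemize}
\item The threshold is $\frac{1+\lambda}{a}S=\frac{1+\lambda}{2\sqrt\lambda}\sqrt{8\lambda C_4+25\gamma^2(1+\lambda)^2}$. So the hypothesis of the present theorem is exactly \eqref{gy1}.
\item The time prefactor is $\frac{1}{2S}=\frac{2\sqrt\lambda}{\sqrt{8\lambda C_4+25\gamma^2(1+\lambda)^2}}$.
\item In the logarithm, multiply numerator and denominator $-a(m_0+\lambda M_0)\pm(1+\lambda)S$ by $4\sqrt\lambda$. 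This turns them into $-2\sqrt\lambda(m_0+\lambda M_0)\pm(1+\lambda)\sqrt{8\lambda C_4+25\gamma^2(1+\lambda)^2}$, which is the stated bound $t^*$.
\end{itemize}
Theorem \ref{gygj} then gives $m(t)\le h_\lambda(t)/(1+\lambda)\to-\infty$ before $t^*$. This is the $x$-derivative blowing up, i.e.\ breaking.

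The step I expect to need care is the degenerate case $\gamma=0$ and $C_4=0$. There $S=0$, the stated $t^*$ is of the form $0\cdot\ln(1)$, and the formula must be read as a limit. In that case I will argue directly from $h_\lambda'\le -\frac{a}{1+\lambda}h_\lambda^2$ with $h_\lambda(0)<0$, which gives the finite time $\frac{1+\lambda}{a\,|h_\lambda(0)|}$. This is precisely the $S\to0$ limit of $t^*$, so the conclusion persists.
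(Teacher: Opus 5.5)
Your proposal is correct and follows the paper's argument: the paper obtains this theorem by inserting $a=\tfrac12$, $b=\tfrac52|\gamma|$, $c=C_4$ from \eqref{muCHMm} into Theorem~\ref{gygj}. Your identity $\sqrt{ac+\frac{b^2(1+\lambda)^2}{4\lambda}}=\frac{1}{4\sqrt{\lambda}}\sqrt{8\lambda C_4+25\gamma^2(1+\lambda)^2}$ reproduces both the threshold and $t^*$, and your extra points (the case $T=\infty$, and the degenerate case $\gamma=C_4=0$ read as the limit $\frac{1+\lambda}{a|h_\lambda(0)|}$) go slightly beyond what the paper writes but are consistent with it.
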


The hyperelastic rod equation \eqref{hreq} can be form the following Riccati-type differential inequalities \cite{r2}
\begin{align}\label{rodMm}
\begin{cases}
m'(t)\leq-\dfrac{\gamma}{2}m^2(t)+|\rho|\left(M(t)-m(t)\right)+C_6\left\|u_0\right\|_{H^1(\mathbb{R})}^2,
\\[2mm]
M'(t)\leq-\dfrac{\gamma}{2}M^2(t)+|\rho|\left(M(t)-m(t)\right)+C_6\left\|u_0\right\|_{H^1(\mathbb{R})}^2,
\end{cases}
\qquad
\text{a.e.}~\text{on}~(0,T),
\end{align}
where
\begin{align*}
C_5=\frac{1}{4}\left(\sqrt{\gamma(12-3\gamma)}-\gamma
\right),\quad C_6=\frac{1}{2}
\max\left\{
\frac{3-\gamma}{2}-C_5,\,0\right\},\qquad 0<\gamma<4.
\end{align*}
Making use of Theorem \ref{gygj}, we obtain the following wave breaking result for the hyperelastic rod equation.
\begin{theorem}
Assume that $0<\gamma<4$ and $u\in C([0,T);H^s(\mathbb{R}))\cap C^1([0,T);H^{s-1}(\mathbb{R})), s\geq 2$ is a solution of the Cauchy problem \eqref{hreq} with the initial value $u_0\in H^s(\mathbb{R})$. For any $\lambda>0$, suppose that
\begin{align}
m_0<-\lambda M_0-\frac{1+\lambda}{\gamma\sqrt{\lambda}}\sqrt{2\lambda\gamma C_6\left\|u_0\right\|_{H^1(\mathbb{R})}^2+\rho^2(1+\lambda)^2}.
\end{align}
Then the corresponding solution of \eqref{hreq} breaks down in finite time $T_0$ with an estimate as
\begin{align}
T_0\leq t^*=&\frac{\sqrt{\lambda}}
{\sqrt{2\lambda\gamma C_6
\left\|u_0\right\|_{H^1(\mathbb{R})}^2+\rho^2(1+\lambda)^2}}\nonumber\\
&\times\ln\frac{-\gamma\sqrt{\lambda}
\bigl(m_0+\lambda M_0\bigr)+(1+\lambda)
\sqrt{2\lambda\gamma C_6\left\|u_0\right\|_{H^1(\mathbb{R})}^2+\rho^2(1+\lambda)^2}}{-\gamma\sqrt{\lambda}\bigl(m_0+\lambda M_0\bigr)-(1+\lambda)\sqrt{2\lambda\gamma C_6\left\|u_0\right\|_{H^1(\mathbb{R})}^2+\rho^2(1+\lambda)^2}}.
\end{align}
\end{theorem}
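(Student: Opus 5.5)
The plan is to derive this theorem as a direct corollary of Theorem \ref{gygj}. We specialize it to the system \eqref{rodMm} with
\begin{align*}
a=\frac{\gamma}{2},\qquad b=|\rho|,\qquad c=C_6\left\|u_0\right\|_{H^1(\mathbb{R})}^2 .
\end{align*}
Since $0<\gamma<4$, $a>0$. We also have $b\geq0$, and $c\geq0$ because $C_6\geq0$. So the structural hypotheses on the constants in \eqref{gy0} are satisfied.

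First I will verify the remaining hypotheses of Theorem \ref{gygj} for $m(t)=\inf_x u_x(t,x)$ and $M(t)=\sup_x u_x(t,x)$.

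\emph{Regularity.} Continuity and a.e.\ differentiability of $m,M$ on $[0,T)$ follow from the Constantin--Escher lemma \cite{r4} applied to $u_x\in C([0,T);H^{s-1})\cap C^1([0,T);H^{s-2})$. If $s=2$ is too weak for that lemma, I would first take $s\geq3$ and then pass to $s\geq2$ by the standard density argument, using that the blow-up time does not depend on $s$.

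\emph{Differential inequalities.} The inequalities \eqref{rodMm} are quoted from \cite{r2}. They come from differentiating \eqref{hreq} in $x$, evaluating at the extremal points $\xi_1(t),\xi_2(t)$ (where $u_{xx}=0$), and bounding the nonlocal terms. The term $G*(\cdot)$ is controlled by the conserved $H^1$ norm, and the term $\rho\,G_x*u_x$ is bounded by $|\rho|(M-m)$ exactly as in the proof of Theorem \ref{fwgj}.

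\emph{Sign of $M$.} We have $M(t)\geq0$ because $u(t,\cdot)\in H^1(\mathbb{R})$ vanishes at $\pm\infty$, so $u_x$ cannot be strictly negative everywhere.

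\emph{Finiteness of $T$.} Theorem \ref{gygj} assumes $T<\infty$. I handle this by contradiction: if $T>t^*$ (including $T=\infty$), apply the argument of Theorem \ref{gygj} on $[0,t^*)$. This gives $m(t)\leq h_\lambda(t)/(1+\lambda)\to-\infty$ as $t\to t^*$, which contradicts $u\in C([0,t^*];H^s)\subset C([0,t^*];W^{1,\infty})$. Hence $T_0\leq t^*$, and blow-up occurs through $u_x\to-\infty$ while $u$ stays bounded by the $H^1$ conservation, i.e.\ wave breaking.

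The remaining step is algebraic: matching the constants. Writing $S=\sqrt{2\lambda\gamma C_6\|u_0\|_{H^1}^2+\rho^2(1+\lambda)^2}$, we get
\begin{align*}
\sqrt{ac+\frac{b^2(1+\lambda)^2}{4\lambda}}=\frac{S}{2\sqrt{\lambda}} .
\end{align*}
Then $\frac{1+\lambda}{a}\cdot\frac{S}{2\sqrt{\lambda}}=\frac{(1+\lambda)S}{\gamma\sqrt{\lambda}}$, so condition \eqref{gy1} becomes exactly the stated initial condition. For the time bound, the prefactor $1/(2\cdot S/(2\sqrt\lambda))$ equals $\sqrt{\lambda}/S$. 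Inside the logarithm, both numerator and denominator $-a h_\lambda(0)\pm(1+\lambda)S/(2\sqrt\lambda)$ can be multiplied by $2\sqrt{\lambda}$. This produces $-\gamma\sqrt{\lambda}(m_0+\lambda M_0)\pm(1+\lambda)S$, which is the claimed estimate for $t^*$.

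I expect the only real obstacle to be the analytic input, not the algebra. It consists of the validity of \eqref{rodMm} with these constants (taken from \cite{r2}) and the blow-up alternative for \eqref{hreq} in $H^s$, $s\geq2$: the solution ceases to exist at a finite $T$ if and only if $\liminf_{t\to T}\inf_x u_x=-\infty$. Both are standard, and I will cite them rather than reprove them.
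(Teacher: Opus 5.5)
Your proposal is correct and follows the paper's route exactly: the paper obtains this theorem by applying Theorem \ref{gygj} to \eqref{rodMm} with $a=\gamma/2$, $b=|\rho|$, $c=C_6\|u_0\|_{H^1(\mathbb{R})}^2$, and your matching of the threshold and of $t^*$ via $\sqrt{ac+b^2(1+\lambda)^2/(4\lambda)}=S/(2\sqrt{\lambda})$ is right. Your additional checks (regularity of $m,M$ via the Constantin--Escher lemma with a density argument, $M\geq 0$, and the contradiction argument that replaces the awkward hypothesis $T<\infty$ in Theorem \ref{gygj}) are sound, and they fill in details the paper leaves implicit.
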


\section*{Acknowledgment}
The authors would like to thank Professor Jean-Claude Saut for his valuable comments and suggestions, which served to improve the manuscript. \\
Yang was supported by the National Natural Science Foundation of China (No. 12561101), Yin was supported by the National Natural Science Foundation of China (No. 12571261).

\section*{Data availability }
No data was used for the research described in the article.

\section*{Conflict of interest }
The authors declare that they have no conflict of interest.

\end{document}